\newcommand{\calm}{{\cal M}}
\newcommand{\calc}{{\cal C}}
\newcommand{\calt}{{\cal T}}
\newcommand{\al}{\alpha}                
\newtheorem{thm}{Theorem}[section]
\newtheorem{defn}{Definition}[section]
\newtheorem{lemma}{Lemma}[section]
\def\Th{\Theta}
\begin{document}

\title{A Fluid Limit for Processor-Sharing Queues Weighted by Functions of Remaining Amounts of Service}

\author{Yingdong Lu \\ IBM T.J. Watson Research Center \\Yorktown Heights, NY 10598, USA}

\date{September, 2019}
\maketitle

\begin{abstract}
We study a single server queue under a processor-sharing type of scheduling policy, where the weights for determining the sharing are given by functions of each job's remaining service(processing) amount, and obtain a fluid limit for the scaled measure-valued system descriptors. 
\end{abstract}



\section{Stochastic Processes and Fluid Limits}
\label{sec:one}

Consider a single server queueing model, in which jobs in the system share the processing capacity according to their remaining service amount. More specifically, at any time $t\ge 0$, if there are $N$ jobs in the system, then they will be served simultaneously, and the service capacity for the n-th job is $c_n(t) = \frac{w(R_n(t))}{\sum_{k=1}^Nw(R_k(t))}$, where $R_n(t)$ denotes the remaining service amount of the n-th job at time $t$ for $n=1,2,\ldots, N$, and $w(\cdot)$ represents a general weight function. We assume that the total capacity of the server is normalized to be one. As a weight function, $w(\cdot)$ is a bounded, continuous and differentiable function satisfying the following conditions: $w(0)=0$ and $w'(x)>0$ for all $x\ge 0$.  
This type of processor-sharing systems can be used to model many modern computing arrangements, such as server farms, see e.g. \cite{6120298}, and mainframe computers with flexible and dynamic logical partitions, see e.g. \cite{5386837}, where we see dynamic/adaptive scheduling policies are implemented real time based on the load of the systems. We assume that the exogenous arrival process $E(t)$ is a delayed renewal process with rate $\al>0$. The inter-arrival time sequence of the renewal process is denoted as $\{u_n\}, n\ge 1$. Note that $u_1$ might be different from all the others, as it is the residue inter-arrival time. $V(i)$ denotes the amount of service required by the first $i$ arrivals. $\nu_i$ are the service amount random variable for each arrival $i=1,2, \ldots$. Therefore, we can define the following related quantities.  
\begin{align*}
U_i & = \sum_{j=1}^iu_j, \quad
E(t) = \sup\{i\ge 0: U_i\le t\},\\
Z(t) &= Z(0) + E(t) -D(t), \quad
D(t)  = \sum_{j=1}^{Z(0)} {\bf 1}_{\{\nu_j \le S_j(t)\}} + \sum_{j=Z(0)+1}^{Z(0) + E(t)}  {\bf 1}_{\{\nu_j \le S_j(t)\}}.
\end{align*}
where $S_j(t)$, the cumulative service amount the $j$-th job received, satisfies the following relationship, for any $t$ and $t+h$ before the departure time of job $j$, 
\begin{align*}
S_j(t+h) = S_j(t) + \int_t^{t+h} \frac{w(R_j(s))}{\sum w(R_i(s))} ds,
\end{align*}
with $R_j=\nu_j -S_j(t)$ and $Z(t)$ denotes the number of jobs in the system. 

The object under consideration in this paper will be a sequence of this type of queueing systems indexed by $r=1,2, \dots$, and they satisfy the following heavy traffic property.  As $r\rightarrow \infty$, $\al^r \rightarrow \al $ and $\nu^r \rightarrow \nu$ weakly, and $\al \langle \chi, \nu \rangle =1$ with $\chi(x) =x$. Here the limit takes values in ${\mathcal M}_F$, the set of finite, nonnegative and Borel measures on ${\mathbb R}_+$, equipped with the topology of weak convergence. Meanwhile, for any $\xi \in {\mathcal M}_F$, and a real valued Borel measurable function $g(\cdot)$, define, $\langle g, \xi \rangle = \int_{{\mathbb R}_+} g(x) \xi(dx)$.  For system-$r$, the parameter $\rho^r=\al^r \langle \chi, \nu^r \rangle$ denotes the traffic intensity. In addition, we assume that the initial condition of the systems are similarly scaled. More specifically, denote $\mu^r(0)$ as the measure that represents the initial jobs of the $r$-th system, i.e. the descriptor of the jobs, along with their remaining service amount. We assume that, there exists a measure $\Th$ such that, $(\langle{\bar \mu}^r(0), \langle \chi, {\bar \mu}^r(0)\rangle ) \Rightarrow (\Th, \langle \chi, \Th\rangle ),  \hbox{as } n\rightarrow \infty, $
which means that the scaled initial system measures, ${\bar \mu}^r(0)= \frac{1}{r} \mu^r(0)$, converge weakly to a probability distribution. 

In this paper, we will establish the fluid limit, i.e. the functional law of large number, of such sequence of measure-valued stochastic processes. More specifically, in sec. \ref{sec:heuristic}, we will provide a heuristic argument for obtaining the specific form of the fluid limit; then in sec. \ref{sec:existence}, the existence and uniqueness of the fluid limit will be established through a Picard iteration; and finally, in sec. \ref{sec:tightness}, we will establish the convergence to the fluid limit by establishing the tightness of the scaled processes, and a study of the limit points of converging subsequences. 

It can be seen that, with a weight function $w(\cdot)$, we are dealing with a processor-sharing system with more flexibility. The introduction of the weight function also allows us to model and study the applications mentioned above.  The logic followed in this paper, establishing the existence and uniqueness of the fluid limit via tightness and convergence arguments, is the same as that in \cite{gromoll2002}. For the proof of the existence and uniqueness of the fluid limit, a rather concise Piccard iteration, rooted in the theory of ordinary differential equations, has been used. The tightness and convergence arguments in sec. \ref{sec:tightness} rely on results established in \cite{gromoll2002} or their slight variations. Therefore, some of the arguments that are same or similar to those in that paper will be pointed out with detailes omitted.

\section{A Heuristic Derivation of the Fluid Limit}
\label{sec:heuristic}

In the study of processor-sharing queue, see e.g. \cite{gromoll2002}, the process 
$\mu^r(t) =\sum_{i=1}^{E^r(t)} {\bf 1}_+(R^r_i(t)) \delta_{R^r_i(t)}$
fully captures the dynamics of the queue, 
where ${\bf 1}_+ (\cdot) ={\bf 1} \{ (0,\infty) \} (\cdot)$, and $\delta_x$ denotes the point measure on point $x$. For the stochastic processing systems considered in this paper, we will provide a heuristic derivation of the fluid limit the special case of Poisson arrivals, for the purpose of providing a logic on the form of the fluid limit will take. The existence and uniqueness of the fluid limit, as well as the convergence of the stochastic processes, will be established in the two sections that follow.  

Under the Poisson assumption, on the event that there is no new arrival in the interval of $[t, t+\Delta t]$, which will have probability $1-\al \Delta t + o(\Delta t)$, for any test function $f(\cdot)$, consider the part of the queue with jobs arrive after time $0$, we have, 
\begin{align*}
\langle f, \mu^r(t+\Delta t)\rangle- \langle f, \mu^r(t)\rangle = & \sum_{i=1}^{E^r(t)} \left\{f\left( R^r_i(t) -\frac{w(R^r_i)}{\sum_i w(R^r_i)} \Delta t+o(\Delta t)\right) -f(R^r_i) \right\} \\ = &  -\sum_{i=1}^{E^r(t)} f' (R^r_i)\frac{w(R^r_i)}{\sum_i w(R^r_i)} \Delta t+o(\Delta t).
\end{align*}
On the event that there is one arrival in the interval of $[t, t+\Delta t]$, which will have probability $\al \Delta t + o(\Delta t)$,  then, for any test function $f(\cdot)$ that are bounded and absolutely continuous, we have, similarly,
\begin{align*}
&\langle f, \mu^r(t+\Delta t)\rangle- \langle f, \mu^r(t)\rangle  = & - \sum_{i=1}^{E^r(t)} f' (R^r_i)\frac{w(R^r_i)}{\sum_i w(R^r_i)} \Delta t+ \langle f, \nu \rangle+o(\Delta t).
\end{align*}
Based upon above observation, we will expect a scaled system characteristics process eventually behave similarly in fluid limit. More specifically, for $r=1, 2, \ldots$, define the following process,
\begin{align*}
{\bar E}^r(t) =\frac{1}{r} E^r(rt), \quad {\bar Z}^r(t) =\frac{1}{r} Z^r(rt),\quad {\bar \mu}^r(t) =\frac{1}{r} \mu^r(rt), {\bar Z}^r(t) =\frac{1}{r} Z^r(rt)
\end{align*}
and we have the following result,
\begin{thm}
	\label{thm:fluid}
Under the heavy traffic condition, the sequence of scaled system characteristics, ${\bar \mu}^r(t)$ converges to a fluid limit, ${\bar \mu}(t)$, governed by the following dynamics.
\begin{align}
\label{eqn:fluid_mode_D}
\frac{d}{dt} \langle f, {\bar \mu}(t) \rangle = -\frac{\langle f' w, {\bar \mu}(t) \rangle }{\langle w, {\bar \mu}(t) \rangle } + \alpha \langle f, \nu \rangle.
\end{align}
for any $f(\cdot)\in {\mathbb C}$ where ${\mathbb C} =\{ g\in  {\bf C}_b^1({\mathbb R}_+): g(0)=g'(0) =0\}$ and $C_b^1({\mathbb R_+})$ represents the space of once continuous differential functions defined on ${\mathbb R_+}$, the set of nonnegative real numbers. 
\end{thm}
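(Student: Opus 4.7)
The plan follows the roadmap sketched in Section \ref{sec:one}: first show that the fluid equation (\ref{eqn:fluid_mode_D}) with initial condition $\Th$ has a unique solution via a Picard iteration; then establish tightness of $\{\bar\mu^r\}$ in $D([0,T],\mathcal{M}_F)$; finally identify every subsequential limit as a solution of (\ref{eqn:fluid_mode_D}), so that uniqueness promotes subsequential convergence to full convergence.

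For existence and uniqueness I would exploit the transport structure of (\ref{eqn:fluid_mode_D}). Each point mass in $\bar\mu(t)$ at position $x$ moves backward with velocity $w(x)/\langle w,\bar\mu(t)\rangle$, so given a candidate trajectory $\bar\mu^{(n)}(\cdot)$ one defines the time-dependent characteristic flow
\begin{equation*}
\frac{d}{dt}\Phi^{(n)}_{s,t}(x) = -\frac{w(\Phi^{(n)}_{s,t}(x))}{\langle w,\bar\mu^{(n)}(t)\rangle},\qquad \Phi^{(n)}_{s,s}(x)=x,
\end{equation*}
and sets $\langle f,\bar\mu^{(n+1)}(t)\rangle = \langle f\circ\Phi^{(n)}_{0,t},\Th\rangle + \al\int_0^t\langle f\circ\Phi^{(n)}_{s,t},\nu\rangle\,ds$ for $f\in\mathbb{C}$. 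Using boundedness and Lipschitz continuity of $w$, together with Lipschitz control of $1/\langle w,\cdot\rangle$ on the relevant range of measures, this map is a contraction in the bounded-Lipschitz metric on a short time window, and the fixed point extends to $[0,T]$ by concatenation.

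For tightness of $\{\bar\mu^r\}$ I would apply Jakubowski's criterion. Compact containment follows from $\langle 1,\bar\mu^r(t)\rangle\le\langle 1,\bar\mu^r(0)\rangle+\bar E^r(t)$, which is tight by the assumed convergence of the initial measure together with the renewal FLLN. Tightness of $\{\langle f,\bar\mu^r\rangle\}_r$ for $f\in\mathbb{C}$ follows from the identity
\begin{equation*}
\langle f,\bar\mu^r(t)\rangle = \langle f,\bar\mu^r(0)\rangle - \int_0^t\frac{\langle f'w,\bar\mu^r(s)\rangle}{\langle w,\bar\mu^r(s)\rangle}\,ds + \frac{1}{r}\sum_{i=1}^{E^r(rt)} f(\nu_i^r),
\end{equation*}
where the condition $f(0)=0$ eliminates any contribution from departures. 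An Aldous-type modulus estimate, using boundedness of $f'w$ to control the drift and the renewal FLLN together with $\nu^r\Ra\nu$ to control the arrival sum, closes the tightness argument. Passing to the limit along any subsequence, the arrival sum converges to $\al\int_0^t\langle f,\nu\rangle\,ds$ and the drift converges by continuous mapping, so every subsequential limit satisfies (\ref{eqn:fluid_mode_D}) with initial condition $\Th$, and uniqueness from the first step finishes the proof.

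The main obstacle is the nonlinear singular denominator $\langle w,\bar\mu(t)\rangle$: since $w(0)=0$, mass concentrating near the origin could make the drift blow up and destroy both the contraction estimate in the Picard step and the continuous-mapping step for the limit points. The restriction $f(0)=f'(0)=0$ for $f\in\mathbb{C}$ partially cancels the singularity by making $f'w$ small near the origin, but to fully close the argument I would prove an a priori lower bound on $\langle w,\bar\mu(t)\rangle$ uniform on $[0,T]$, using a Gronwall-type argument that exploits continual replenishment of mass away from zero as long as $\langle w,\nu\rangle>0$ and $\Th$ is not concentrated at the origin.
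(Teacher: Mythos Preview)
Your proposal follows the same three-step architecture as the paper: a Picard iteration for existence and uniqueness of the fluid model, Jakubowski's criterion for tightness of $\{\bar\mu^r\}$, and identification of subsequential limits with the fluid equation. At that level the strategies coincide.

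Within each step, however, your techniques differ from the paper's. For the Picard iteration, the paper works directly with the integral form (\ref{eqn:fluid_mode}) as a self-map on signed-measure-valued paths, equipped with the dual norm $\|\mu\|_1=\sup\{\langle f,\mu\rangle:\|f\|_\infty\le1,\ \|f'\|_\infty\le1\}$, and proves contraction by an algebraic splitting of the difference of quotients (Lemma~3.1). You instead linearize via the characteristic flow $\Phi^{(n)}_{s,t}$ and iterate the transport representation; this is more geometric and standard for nonlinear transport equations, but it needs Lipschitz control on $w$, which is not assumed globally (only $w\in C^1$, bounded, with $w'>0$), though it holds on compacts once residuals are truncated. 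For tightness and limit identification, the paper controls increments $\langle g,\bar\mu^r(t+h)\rangle-\langle g,\bar\mu^r(t)\rangle$ directly, leaning on the high-probability events (bounded workload, bounded residuals, lower-bounded initial work) borrowed from \cite{gromoll2002}. You instead write down the exact pre-limit identity for $\langle f,\bar\mu^r(t)\rangle$ and pass to the limit term by term. Your route is cleaner and makes the appearance of the fluid equation transparent; the paper's route stays closer to the machinery already built in \cite{gromoll2002} and avoids having to justify that integral identity at the stochastic level (in particular when $\langle w,\mu^r(s)\rangle$ may vanish).

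On the singular denominator: the paper does not prove an a priori lower bound on $\langle w,\bar\mu(t)\rangle$ either; it simply imposes $\langle w,\mu\rangle\ge\delta$ as a hypothesis in the contraction lemma and then appeals informally to the constant-workload property of the fluid limit from \cite{gromoll2002}. Your proposed Gronwall argument would be a genuine strengthening if it goes through, but note that substituting $f=w$ directly into (\ref{eqn:fluid_mode_D}) is not licit since $w'(0)>0$ forces $w\notin\mathbb{C}$, so any such estimate must proceed through an approximation of $w$ by test functions in $\mathbb{C}$.
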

To prove this theorem, we need to address the following two questions, 1) the existence and uniqueness of the fluid process defined as above; 2) the convergence of the scaled state processes. These questions are answered in the following two sections respectively. 


\section{The Existence and Uniqueness of the Fluid Limit}
\label{sec:existence}

Express the fluid limit \eqref{eqn:fluid_mode_D} in an integral form, we have, 
\begin{align}
\label{eqn:fluid_mode}
\langle g, {\bar \mu}(t) \rangle = \langle g, {\bar \mu}(0) \rangle-\int_0^t  \frac{\langle g'w,  {\bar \mu}(s) \rangle }{\langle w,  {\bar \mu}(s) \rangle}ds + \al t \langle g, \nu \rangle.
\end{align}
To establish  the existence and uniqueness of the fluid limit, we will follow a different approach from that developed for processor sharing queue in \cite{gromoll2002}. Especially, for the existence, we will adopt the Picard iteration, a commonly used procedure for proving the existence of the solutions to ordinary differential equations. For this purpose, the iterations are required to be defined on functional spaces of signed measures.
\begin{defn}
	A set function $\nu$ defined for a metric space $X$ is called a signed measure if $\nu(\emptyset) =0$, the domain of the definition is a $\sigma$-algebra, and $\nu$ is $\sigma$-additive.
\end{defn}
Hahn-Jordan decomposition ( see e.g.  \cite{halmos1976measure}) assures that, for any signed measure $\nu$, there exist two measures $\nu^+$ and $\nu^-$ satisfying $\nu= \nu^+-\nu^-$, and $\nu^+, \nu^- \ge 0$. Furthermore,  $\nu^+$ and $\nu^-$ are orthogonal, i.e., $\nu^+(B) >0\Rightarrow \nu^-(B) =0$ and $\nu^-(B) >0\Rightarrow \nu^+(B) =0$. One can thus define the total variation norm: $||\nu|| = \nu^+(X) + \nu^-(X)$. The space of the signed measures, under the total variation norm, forms a Banach space. This space will be denoted as ${\cal M}$, and it is also known as the {\it ca}-space, see, e.g. \cite{DunfordSchwartz}. Meanwhile, if the underlying space $X$ is locally compact separable, the Riesz\--Markov\--Kakutani Representation Theorem (See, e.g. \cite{Rudin:1987:RCA:26851}) implies that ${\cal M}$ is the dual of the Banach space of all continuous real functions on $X$ under the supremum norm. Since our test functions live on ${\bf C}_b^1({\mathbb R}_+)$, in other words, there are restrictions on both the value of the functions and their derivatives, the measure space defined by duality will be slightly larger. More specifically, the norm of the measures will be defined as, $||\mu||_1= \sup\{ \langle f, \mu \rangle, ||f||_\infty\le 1, ||f'||_\infty\le 1\}$.


In our study $X$ is ${\mathbb R}$, hence the assumptions of the representation theorem are satisfied. Similar to the Picard iteration for ordinary differential equation, we will show that the following ''local'' version of the fluid model exists
\begin{align}
\label{eqn:fluid_mode_local}
\langle g, {\bar \mu}(t) \rangle = \langle g, {\bar \mu}(t_0) \rangle- \int_{t_0}^t\left[\frac{ \langle g' w,  {\bar \mu}(s) \rangle }{\langle w,  {\bar \mu}(s) \rangle} - \al  \langle g, \nu(s)\rangle \right]ds.
\end{align}
for any $t_0$, and $t\in [t_0, t_0+\Delta]$ for some $\Delta>0$ independent of $t_0$. The uniqueness is immediate, and its extension to the global version follows from the uniformality of $\Delta$, apply the same argument as those in ODE, see, e.g. \cite{coddington1955theory}.

First,  let us start with an arbitrary initial continuous map from ${\mathbb R}_+$ to {\it ca}-space ${\cal M}$, denoted as $ {\bar \mu}_0(t)$, for example, for simplicity, we can have ${\bar \mu}_0(t)={\bar \mu}_0(0)$, that is they all equal to the fluid limit of initial state. This is a member of the space $C([t_0, t_1], {\cal M})$ with the supremum norm $||\mu(t)||_c:=\sup_{t\in [t_0, t_1]} ||\mu(t)||_1$ for certain $t_1$. From the above duality, we know that, for any time $t \in [t_0, t_1]$,  $\langle f, {\bar \mu}_0(t)\rangle$ defines a functional on $\calc(X)$, i.e. the space of continuous functions under the supremum norm $||\cdot ||_\infty$. 

To update from $ {\bar \mu}_n(t)$ to $ {\bar \mu}_{n+1}(t)$, for any $n \in {\mathbb Z}_+$,  define, for any function $g\in \calc(X)$,
\begin{align}
\label{eqn:functional}
T_{n+1}(t) g  &= \langle g,  {\bar \mu}_n(t_0) \rangle - \int_{t_0}^t \left[\frac{\langle g'w, {\bar \mu}_n(s) \rangle}{ \langle w, {\bar \mu}_n(s) \rangle}  - \al  \langle g, \nu(s)\rangle \right]ds.
\end{align}
Thus, for each $t$, $T_{n+1}(t) $ is a functional on $\calc(X)$, again by the Riesz\--Markov\--Kakutani Representation Theorem, this decides a measure on $X$. Collectively, this gives us ${\bar \mu}_{n+1}(t)$. Let us denote this mapping $\calt$, it apparently does not depend on $n$. More precisely, for any $\eta(t) \in C([t_0, t_1], \calm)$, $\calt (\eta(t))$ is also a member of $C([t_0, t_1], \calm)$ defined by its action on the dual space,
\begin{align}
\label{eqn:functional_general}
\langle g,  {\bar \eta}(t_0) \rangle - \int_{t_0}^t \frac{\langle g'w, {\bar \eta}(s) \rangle}{ \langle w, {\bar \eta}(s) \rangle} ds.
\end{align}


\begin{lemma}
	For a given $\delta>0$, there exists a $\Delta>0$ that is independent of $t_0$, and for any $t\in [t_0. t_0+\Delta]$, let $\mu(t)$ and $\eta(t)$ in ${\mathbb R}_+\rightarrow {\cal M}$ $\langle w, \mu\rangle \ge \delta$, there exist $\zeta \in(0,1)$ such that, for any test function $g$, we have, 
	\begin{align}
	\label{eqn:contraction}
	|| \calt \mu(t) - \calt \eta(t)||_c \le\zeta ||\mu(t)-\eta(t)||_c.
	\end{align}
\end{lemma}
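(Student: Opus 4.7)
The plan is to show that on the affine subspace of $C([t_0,t_1],\calm)$ consisting of curves sharing the fixed initial value $\mu^\ast(t_0)$ (as is the case for all Picard iterates), $\calt$ is an integral operator whose integrand is Lipschitz in the measure argument under the hypothesis $\langle w,\cdot\rangle\ge\delta$, so that integrating over $[t_0,t_0+\Delta]$ produces a factor $(t-t_0)\le\Delta$ that can be made small enough to drive the Lipschitz constant below one, uniformly in $t_0$. On that subspace the boundary term $\langle g,\mu(t_0)-\eta(t_0)\rangle$ vanishes, so for any $g$ with $\|g\|_\infty,\|g'\|_\infty\le 1$ and any $t\in[t_0,t_0+\Delta]$ I reduce matters to bounding
\begin{equation*}
\Bigl|\int_{t_0}^{t}\!\left[\frac{\langle g'w,\mu(s)\rangle}{\langle w,\mu(s)\rangle}-\frac{\langle g'w,\eta(s)\rangle}{\langle w,\eta(s)\rangle}\right]ds\Bigr|.
\end{equation*}

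Next, I would apply the identity $\tfrac{a}{b}-\tfrac{c}{d}=\tfrac{a-c}{b}+\tfrac{c(d-b)}{bd}$ with $a=\langle g'w,\mu\rangle$, $b=\langle w,\mu\rangle$, $c=\langle g'w,\eta\rangle$, $d=\langle w,\eta\rangle$ to split the integrand into $\tfrac{\langle g'w,\mu-\eta\rangle}{\langle w,\mu\rangle}$ plus $\tfrac{\langle g'w,\eta\rangle\,\langle w,\eta-\mu\rangle}{\langle w,\mu\rangle\langle w,\eta\rangle}$. The hypothesis $\langle w,\mu\rangle,\langle w,\eta\rangle\ge\delta$ bounds the denominators below by $\delta$ and $\delta^2$ respectively. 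Since $w$ is bounded with bounded derivative and $g\in C_b^1$ with $\|g\|_\infty,\|g'\|_\infty\le 1$, both $w$ and $g'w$ lie in a fixed multiple of the unit ball of the test-function space that defines $\|\cdot\|_1$, giving the pointwise estimates $|\langle g'w,\mu(s)-\eta(s)\rangle|\le K\|\mu(s)-\eta(s)\|_1$ and $|\langle w,\eta(s)-\mu(s)\rangle|\le K\|\mu(s)-\eta(s)\|_1$, while $|\langle g'w,\eta(s)\rangle|$ is controlled by $K$ times the (uniformly bounded) total mass of $\eta$.

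Combining these yields a pointwise-in-$s$ bound of the form $\tfrac{M}{\delta^2}\|\mu(s)-\eta(s)\|_1$, and integrating then taking sup in $t\in[t_0,t_0+\Delta]$ and in $g$ gives $\|\calt\mu-\calt\eta\|_c\le \tfrac{M\Delta}{\delta^2}\|\mu-\eta\|_c$. Choosing $\Delta<\delta^2/M$ produces $\zeta=M\Delta/\delta^2\in(0,1)$, and since neither $M$ nor $\delta$ depends on $t_0$ the choice of $\Delta$ is likewise independent of $t_0$, as required.

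The main obstacle is the pointwise bound $|\langle g'w,\mu-\eta\rangle|\le K\|\mu-\eta\|_1$: the test functions underlying $\|\cdot\|_1$ are required to have \emph{both} $\|\cdot\|_\infty$ and derivative bounded, so to fit $g'w$ into that class one needs to control $(g'w)'=g''w+g'w'$, which exceeds the $C_b^1$-regularity postulated for $g$. I would handle this by restricting the iteration to $g\in C_b^2$ (dense in the test-function space) and passing to the limit, or equivalently by an integration-by-parts identity that shifts the derivative off of $g$ and onto $w$, the latter already being $C_b^1$. A secondary check is the invariance of the condition $\langle w,\cdot\rangle\ge\delta$ along the iteration, which follows from the structure of \eqref{eqn:functional_general} together with a uniform total-mass bound.
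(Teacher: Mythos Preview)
Your approach is essentially the paper's: the same reduction to the integral, the same algebraic split $\tfrac{a}{b}-\tfrac{c}{d}=\tfrac{a-c}{b}+\tfrac{c(d-b)}{bd}$, and the same choice of a small $\Delta$ to force a contraction. One refinement in the paper worth noting: for the second term the paper bounds $|\langle g'w,\eta\rangle|\le|\langle w,\eta\rangle|$ (using $\|g'\|_\infty\le 1$) and then cancels the factor $\langle w,\eta\rangle$ against one factor in the denominator $\langle w,\mu\rangle\langle w,\eta\rangle$, so only a single $1/\delta$ appears rather than your $1/\delta^2$, and only the hypothesis $\langle w,\mu\rangle\ge\delta$ is actually used. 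The regularity obstacle you raise---that controlling $|\langle g'w,\mu-\eta\rangle|$ by $\|\mu-\eta\|_1$ requires bounding $(g'w)'$, hence $g''$---is a genuine point that the paper's argument glosses over; your proposed remedy of working first with $g\in C_b^2$ and extending by density is the natural fix.
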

\begin{proof}
By the definition given in \eqref{eqn:functional}, we know that for any $g(\cdot)$ satisfies $||g||_\infty\le1$ and $||g'||_\infty\le 1$, 
\begin{align*}
|\langle  \calt \mu(t), g \rangle - \langle \calt \eta(t), g \rangle| = \Big|  \int_{t_0}^t \left[\frac{\langle g'w,\mu(s) \rangle}{\langle w, \mu(s)\rangle}- \frac{\langle g'w,\eta(s) \rangle}{\langle w, \eta(s)\rangle} \right]ds \Big|.
\end{align*}
The integrand on the right hand side can be written as,
\begin{align*}
 & \frac{\langle g'w,\mu(s) \rangle}{\langle w, \mu(s) \rangle}- \frac{\langle g'w,\eta(s)  \rangle}{\langle w, \eta(s) \rangle} \\ =&    \frac{\langle g'w,\mu(s)  \rangle-\langle g'w,\eta(s)  \rangle}{\langle w , \mu(s) \rangle} + \langle g'w, \eta(s)  \rangle \left[ \frac{1}{\langle w, \mu(s)  \rangle} - \frac{1}{\langle w, \eta(s) \rangle}\right].
\end{align*}
Therefore,
\begin{align*}
&|\langle  \calt \mu(t), g \rangle - \langle \calt \eta(t), g \rangle|
\\ \le &  \int_{t_0}^t \Big|  \frac{\langle g'w,\mu(s)  \rangle-\langle g'w,\eta(s)  \rangle}{\langle w , \mu(s) \rangle} \Big|ds + \int_{t_0}^t \Big|\langle g'w, \eta(s)  \rangle \left[ \frac{1}{\langle w, \mu(s)  \rangle} - \frac{1}{\langle w, \eta(s) \rangle}\right]\Big|ds
\end{align*}
The first term,
\begin{align*}
 \int_{t_0}^t \Big|  \frac{\langle g'w,\mu(s)  \rangle-\langle g'w,\eta(s)  \rangle}{\langle w , \mu(s) \rangle} \Big|ds& \le  \frac{||g'||_\infty|| w||_\infty || \mu-\eta||_c }{\delta} (t-t_0),
\end{align*}
from the definition of the $|| \cdot ||_c$ norm and the fact that $\langle w , \mu(s) \rangle \ge \delta$.
Similarly, for the second term, we have,
\begin{align*}
& \int_{t_0}^t \Big|\langle g'w, \eta(s)  \rangle \left[ \frac{1}{\langle w, \mu(s)  \rangle} - \frac{1}{\langle w, \eta(s) \rangle}\right]\Big|ds \\ \le & 
\int_{t_0}^t \Big|\langle g'w, \eta(s) \rangle \Big| \Big|  \left[ \frac{1}{\langle w, \mu(s)  \rangle} - \frac{1}{\langle w, \eta(s) \rangle}\right]\Big|ds \\ 
\le & 
\int_{t_0}^t \Big|\langle w, \eta(s) \rangle \Big| \Big|  \left[ \frac{\langle w, \eta(s) \rangle -\langle w, \mu(s) \rangle}{\langle w, \mu(s)  \rangle \langle w, \eta(s) \rangle} \right]\Big|ds \\ \le &  \frac{|| w||_\infty || \mu-\eta||_c }{\delta} (t-t_0) .
\end{align*}
Therefore for $t\le t_0+\Delta$, with $\Delta =\frac{\al \delta }{2  || w||_\infty}$, \eqref{eqn:contraction} holds. It is easy to see that $\Delta$ does not depend on $t_0$.  \end{proof}

The above lemma guarantees that the mapping from $ {\bar \mu}_n$ to $ {\bar \mu}_{n+1}$ is a contraction for a neighborhood of $t_0$. Thus, we can apply the Banach fixed point theorem to establish the local existence of the fluid model for a neighborhood of each point, following the same procedures for differential equation, see, e.g. \cite{coddington1955theory}. Similarly, the uniformality of $\Delta$ will allow the existence to be extended to any compact sets. Note that the restriction of $\langle w, \mu\rangle>\delta$ is only a technical assumption, as pointed out in \cite{gromoll2002}, the fluid limit processes usually have constant total workload. It is also easy to verify that the limit is nonnegative and unique.




\section{Convergence to the Fluid Limit}
\label{sec:tightness}

In this section, we will provide main arguments for the convergence of the scaled processes to the fluid limit. Much of the analysis follows from \cite{gromoll2002}, so we will focus on pointing out some of the modifications needed. 

Recall that the fluid scaling systems indexed by integers $r=1,2,\ldots$, and the performance descriptor $\mu^{r}(t)$. More precisely,
${\bar \mu}^r(t) = \frac{1}{r}\mu^{r}(rt),$
with the primitive processes of the systems satisfy, $\al^r \rightarrow \al$, $\nu^r  \rightarrow \nu, weakly,$ $E[u_{1,r}]/r  \rightarrow 0$, $E[u_{2,r}; u_{2,r}>r] \rightarrow 0$, as $r\rightarrow \infty$. The key components of the proof of the convergence of ${\bar \mu}_r(t)$ to the fluid limit \eqref{eqn:fluid_mode} are to show its tightness, which guarantees that each subsequence will have a converging subsequence, and each converging subsequence will have the same limit that is the fluid limit defined in Theorem \ref{thm:fluid}.  

The tightness of a stochastic process is defined as the tightness of the measures it induced, which means the existence of a compact set such that the measure outside which is uniformly small, and details can be found in \cite{billingsley1968convergence}.  Meanwhile, by Jakubowski's criterion, see, e.g.  \cite{gromoll2002}, the proof of the tightness is reduced to the verification of the following condtions,
\begin{itemize}
	\item
	For each $T>0$, and $0<\eta<1$, there is a compact subset $C_{T, \eta}$ of ${\mathcal M}_F$ such that 
	\begin{align*}
	\liminf\limits_{r\rightarrow \infty} P[ {\bar \mu}^r(t) \in C_{T,\eta}, \forall t \in[0,T]] \ge 1-\eta.
	\end{align*}
      \item
	For each $g\in C_b^1({\mathbb R}_+)$, the sequence of real valued processes $\{\langle g, {\bar \mu}^r (\cdot)\rangle\}$ is tight.
\end{itemize}
Essentially, we need to prove the following results,
\begin{lemma}
	\label{lem:tightness}
	Let $g\in {\bf C}_b^1({\mathbb R}_+)$, $T>0$, and $0<\beta<1$, $\eta <1$. Then there exist $M_T, \delta >0$, and $r_0>0$, such that, for any $r> r_0$, we have,
	\begin{align}
	P\left[\sup_{t\in [0,T]}|\langle g, {\bar \mu}^r (t) \rangle| \le M \right] & \ge 1-\eta, \label{eqn:tight_1}\\   P\left[\sup_{\substack{t\in [0, T-\delta]\\ h\in [0,\delta]}} |\langle g, {\bar \mu}^r (t+h) \rangle -\langle g, {\bar \mu}^r(t) \rangle| \le \beta\right] &\ge 1-\eta\label{eqn:tight_2}
	\end{align}  
\end{lemma}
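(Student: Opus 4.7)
The plan is to adapt the tightness arguments of Gromoll (2002), noting that the weight function $w(\cdot)$ only affects the distribution, not the total, of service delivered, since the shares $c_n(s) = w(R_n(s))/\sum_k w(R_k(s))$ always sum to one. The two estimates \eqref{eqn:tight_1} and \eqref{eqn:tight_2} will be treated separately.

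For \eqref{eqn:tight_1}, the elementary inequality $|\langle g, \bar\mu^r(t)\rangle| \le \|g\|_\infty \bar Z^r(t)$ reduces the problem to a uniform bound on $\bar Z^r$. Using the mass-balance inequality $\bar Z^r(t) \le \bar Z^r(0) + \bar E^r(t)$, the assumed weak convergence of the scaled initial data to $\Theta$, and the functional law of large numbers for the delayed renewal process $E^r$ (which holds under the stated moment conditions $E[u_{1,r}]/r \to 0$ and $E[u_{2,r}; u_{2,r} > r] \to 0$), one can guarantee $\sup_{t \in [0,T]} \bar Z^r(t) \le \langle 1, \Theta\rangle + \alpha T + 1$ with probability at least $1 - \eta$ for all sufficiently large $r$. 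Taking $M$ to be $\|g\|_\infty$ times this number yields \eqref{eqn:tight_1}.

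For \eqref{eqn:tight_2}, I decompose $\langle g, \bar\mu^r(t+h)\rangle - \langle g, \bar\mu^r(t)\rangle$ into three pieces corresponding to jobs present throughout $[rt, r(t+h)]$, jobs arriving in this interval, and jobs departing in it. For persistent jobs, $|g(R_j(r(t+h))) - g(R_j(rt))| \le \|g'\|_\infty$ times the service that job $j$ receives; since the weighted shares $c_n(s)$ sum to one, the total service delivered is at most $rh$, and summing then scaling by $1/r$ gives a bound of $\|g'\|_\infty h$. Arriving jobs contribute at most $\|g\|_\infty (\bar E^r(t+h) - \bar E^r(t))$ in absolute value. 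For departing jobs, each such job must receive its full remaining service in the interval, so $(1/r) \sum_{j \text{ dep}} R_j(rt) \le h$; combined with $g(0) = 0$ on the natural test-function class $\mathbb{C}$ from Theorem \ref{thm:fluid}, a first-order Taylor expansion around $0$ yields another $\|g'\|_\infty h$ bound. Summing produces an oscillation bound of the form $C_g [\,h + (\bar E^r(t+h) - \bar E^r(t))\,]$. The FLLN for the renewal process supplies a uniform modulus-of-continuity estimate $\sup_{t,h} (\bar E^r(t+h) - \bar E^r(t)) \le \alpha \delta + \epsilon$ with probability $\ge 1-\eta$ for large $r$, and choosing $\delta$ small enough that the total bound is below $\beta$ completes the proof.

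The main obstacle I anticipate is the departure contribution when $g(0) \ne 0$, since the statement permits $g \in \mathbf{C}_b^1$ rather than only the restricted class $\mathbb{C}$ used for the fluid limit itself. The clean resolution is to restrict the lemma to $\mathbb{C}$ (which is all that is needed for the convergence argument in this section), or, alternatively, to supplement with a tail control on $\nu^r$ showing that the mass of jobs with very small remaining service is negligible in the fluid scale. Beyond this technicality the weight function causes no genuine difficulty: it redistributes the per-job service rates but preserves the total throughput $\sum_n c_n(s) \equiv 1$, so the Gromoll (2002) estimates transfer with only notational changes.
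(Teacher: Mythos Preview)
Your argument for \eqref{eqn:tight_1} is essentially the paper's: both use $|\langle g,\bar\mu^r(t)\rangle|\le\|g\|_\infty\bar Z^r(t)$ together with $\bar Z^r(t)\le\bar Z^r(0)+\bar E^r(t)$ and the FLLN for $\bar E^r$.

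For \eqref{eqn:tight_2} you take a genuinely different route. The paper works with a \emph{per-job} service bound: on a high-probability set $B^r$ it invokes (III) an upper bound $R_i\le M_0$ on remaining amounts and (II),(IV) a lower bound on the workload $\langle\chi,\bar\mu^r\rangle$, in order to control the individual rate $w(R_i)/\langle w,\mu^r\rangle$ and hence each $\bar S^i_{r;t,t+h}$. You instead use the \emph{aggregate} identity $\sum_n c_n(s)\equiv 1$, so that the total service delivered in $[rt,r(t+h)]$ is at most $rh$; combined with the mean-value theorem this gives $\|g'\|_\infty h$ for the persistent-plus-departing contribution in one stroke. Your approach is more elementary and, in particular, does not require establishing a uniform lower bound on $\langle w,\bar\mu^r(t)\rangle$; the paper's per-job bound, on the other hand, gives additional information (e.g.\ a bound on the number of departures in a short interval via the mass of jobs with small residual) that can be useful elsewhere.

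On the departure term: you correctly flag that your bound needs $g(0)=0$, and propose restricting to the class $\mathbb{C}$. The paper's proof is silent on this point; its dynamics identity encodes departures through the indicator $\mathbf{1}_{(0,\infty)}$, and the same issue is present there. Since Jakubowski's criterion only requires tightness of $\langle g,\bar\mu^r(\cdot)\rangle$ for a separating class, your proposed restriction is adequate and is in line with how the subsequent limit-point identification (Lemma~\ref{lem:limiting_point}) is carried out on $\mathbb{C}$ anyway.
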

\begin{lemma}
	\label{lem:limiting_point}
	All the converging subsequences of ${\bar \mu}_r (t)$ converge to the same limit governed by the dynamics in \eqref{eqn:fluid_mode_D}.
\end{lemma}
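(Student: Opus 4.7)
The plan is to take an arbitrary convergent subsequence $\{{\bar\mu}^{r_k}\}$ of the scaled processes, with weak limit denoted ${\tilde\mu}$ in $D([0,T],{\mathcal M}_F)$, and to show that ${\tilde\mu}$ satisfies the integral fluid equation \eqref{eqn:fluid_mode}. Since Section~\ref{sec:existence} established that this equation has a unique solution for a given initial condition, every convergent subsequence must share the same limit, namely the fluid limit ${\bar\mu}$ of Theorem~\ref{thm:fluid}.

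First I would make rigorous the decomposition heuristically derived in Section~\ref{sec:heuristic} for general renewal (not necessarily Poisson) arrivals. For a test function $g\in\mathbb{C}$, between arrival epochs the quantity $\langle g,\mu^r(t)\rangle$ evolves only through the continuous decrease of each remaining service $R^r_i$ at instantaneous rate $w(R^r_i)/\sum_j w(R^r_j)$, while at each arrival epoch $U^r_i$ a point mass with weight $g(\nu^r_i)$ is added; the boundary condition $g(0)=0$ absorbs departures at the origin. After fluid scaling this gives, for every $t\in[0,T]$,
\begin{align*}
\langle g,{\bar\mu}^r(t)\rangle = \langle g,{\bar\mu}^r(0)\rangle - \int_0^t \frac{\langle g'w,{\bar\mu}^r(s)\rangle}{\langle w,{\bar\mu}^r(s)\rangle}\,ds + \frac{1}{r}\sum_{i=1}^{E^r(rt)} g(\nu^r_i).
\end{align*}
Passing along the subsequence $r_k$, the initial term converges to $\langle g,\Theta\rangle$ by the hypothesis on ${\bar\mu}^r(0)$, while the arrival sum converges to $\alpha t\langle g,\nu\rangle$ by the functional law of large numbers for the renewal process $E^r$ combined with the weak convergence $\nu^r\Rightarrow\nu$ and the boundedness of $g$.

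The principal obstacle is passing the limit through the ratio in the time integral, because the functional $\xi\mapsto\langle g'w,\xi\rangle/\langle w,\xi\rangle$ is discontinuous on the set $\{\langle w,\xi\rangle=0\}$. To handle this I would combine the compact-containment bound \eqref{eqn:tight_1} with the oscillation estimate \eqref{eqn:tight_2} of Lemma~\ref{lem:tightness}: these force ${\tilde\mu}$ to have continuous paths, so convergence in the Skorohod topology upgrades to uniform convergence on $[0,T]$. Provided $\langle w,\Theta\rangle>0$, the continuous injection of mass by arrivals keeps $\inf_{s\le T}\langle w,{\bar\mu}^{r_k}(s)\rangle\ge\delta$ with probability tending to one for some $\delta>0$, exactly as anticipated by the technical remark at the end of Section~\ref{sec:existence}. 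On that event, the integrand is uniformly bounded and the ratio functional is continuous at ${\tilde\mu}(s)$ for almost every $s$, so dominated convergence together with $\bar\mu^{r_k}\Rightarrow\tilde\mu$ yields
\begin{align*}
\int_0^t \frac{\langle g'w,{\bar\mu}^{r_k}(s)\rangle}{\langle w,{\bar\mu}^{r_k}(s)\rangle}\,ds \;\longrightarrow\; \int_0^t \frac{\langle g'w,{\tilde\mu}(s)\rangle}{\langle w,{\tilde\mu}(s)\rangle}\,ds.
\end{align*}
Collecting the three limits shows that ${\tilde\mu}$ satisfies \eqref{eqn:fluid_mode} for every $g\in\mathbb{C}$, and the uniqueness result of Section~\ref{sec:existence} forces ${\tilde\mu}={\bar\mu}$, completing the proof.
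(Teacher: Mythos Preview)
Your argument is correct and reaches the same conclusion as the paper, but the route is genuinely different. The paper does not write down an exact pre-limit integral equation; instead it analyses the increment $\langle g,{\bar\mu}^r(t+h)\rangle-\langle g,{\bar\mu}^r(t)\rangle$ for small $h$, splits it into the contribution of jobs already present at $rt$ and that of the arrivals in $(rt,r(t+h)]$, and expands each piece with the mean value theorem. The existing-jobs piece is shown to converge to $-\dfrac{\langle g'w,\mu^*(t)\rangle}{\langle w,\mu^*(t)\rangle}\,h$ and the new-arrivals piece to $\alpha\langle g,\nu\rangle\,h+o(h)$; the paper then invokes a classical ODE stability/convergence theorem (Coddington--Levinson, Theorem~4.1) to conclude that the limit of the subsequence coincides with the unique fluid solution.

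Your approach bypasses the $o(h)$ bookkeeping entirely: writing the exact scaled identity and passing to the limit term by term via continuous mapping and dominated convergence is cleaner and makes the role of the uniqueness result from Section~\ref{sec:existence} more transparent. The paper's incremental argument, on the other hand, stays closer to the heuristic of Section~\ref{sec:heuristic} and ties the identification of the limit explicitly to ODE theory rather than to a direct substitution. Both approaches rely on the same uniform lower bound $\langle w,{\bar\mu}^r(s)\rangle\ge\delta$ to keep the ratio well behaved, and both treat that bound as a technical hypothesis rather than proving it from first principles, so on that point you are neither more nor less rigorous than the paper.
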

\noindent
\begin{proof}[Proof of Lemma \ref{lem:tightness}]
	Under the heavy traffic condition, there exists a set $B^r$, and real number $\epsilon$, $\gamma$, $M_0$ and $M'$, such that, on $B^r$,
	\begin{itemize}
		\item[(I)] Arrival process ${\bar E}^r(t)= \frac{1}{r}E(rt)$ satisfies, $\sup_{t\in[0, T-\ell]} |{\bar E}^r (t+\ell)- {\bar E}^r (t)|\le \epsilon.$ 
		\item[(II)] Workload process $|\langle \chi, {\bar \mu}^r(t)\rangle$ satisfies, \;
	$\sup_{t\in[0,T]}|\langle \chi, {\bar \mu}^r(t) \rangle -\langle \chi, {\bar \mu}^r(0) \rangle| < \frac{\gamma}{4} $.
        \item[(III)]
        The remaining service is cut-off at certain level, i.e., there exists a $M_0>0$, such that, $R_i(s)\le M_0$ for any job $i$ in the system. 
        \item[(IV)] The total service can be lower bounded, i.e. there exists a $M'>0$, such that,  $\langle \chi, {\bar \mu}^r(0) \rangle \ge M'$. 
	\end{itemize}
And $P[B^r]\ge 1-\eta$ for sufficiently large $r$. These results, among others that more specific to processor-sharing policy,  are established in Lemma 5.2 in \cite{gromoll2002}, and (I) to (IV) collected here are among those that are of policy independent.  More specifically, it says that with high probability, the oscillation of the scaled process can be controlled, as seen in (I),   the convergence of the workload process leads to its bounded (II), the individual service amount can be upper bounded the total workload can be also lower bounded (III), and the total workload can be also lower bounded, as seen in (IV).

To see the main result is true, we know that $\sup_{t\in[0,T]} \langle 1, {\bar \mu}^r(t) \rangle \le \langle 1, {\bar \mu}^r(0) \rangle + {\bar E}^r(T)$.
Hence we can have the following uniformly bounded, i.e. $\sup_{t\in[0,T]} ( \langle 1, {\bar \mu}^r(t) \rangle \wedge \langle \chi, {\bar \mu}^r(t) \rangle ) \le M_T$, which settles \eqref{eqn:tight_1}. 
From the scaled dynamics,
  \begin{align*}    \langle g,  {\bar \mu}^r(t+h) \rangle = \langle ({\bf 1}_{(0,\infty)} g) (\cdot - {\bar S}^i_{r;t, t+h}),  {\bar \mu}^r(t)\rangle + \sum_{i=E^r(t) +1}^{E^r(t+h)}{\bf 1}_{(0,\infty)}g(v_i^r - {\bar S}^r_{U_i^r, t+h})
  \end{align*}
 with $v_i^r$ refers to the service time of $i$-th arrival in system $r$ and the scaled version of service, ${\bar S}^i_{r;t, t+h}$ represents the amount of service received by job $i$ during time interval $[rt, rt+rh)$. We know that, it is bounded from above by $\frac{w(R^r_i(rt))}{\langle w, \mu^r(rt)\rangle}\times rh$. To see this, if there are no new arrivals, the proportion will remain to be $\frac{w(R^r_i(rt))}{\langle w, \mu^r(rt)\rangle}\times rh$; if there are new arrivals, the proportion will only be smaller. Meanwhile, we know that on $B^r$, we have, $ R_i(rt) \le M_0$ from (3), which bound the numerator from above. Then, from (2) and (4), we know that as long as $\delta$ is picked to satisfies $\delta M_0 M_T||g'||_\infty<\beta$,  this bounds the denominator from below, then the desired result follows. 
\end{proof}

\begin{proof}[Proof of Lemma \ref{lem:limiting_point}]
	We need to establish that, the limit point of each converging subsequence of  ${\bar \mu}_r(t)$ coincides with the fluid limit. 
	We will first establish the results on a subset of test functions, which has continuous and bounded derivative. Then by the fact that it is dense in the original space of continuous and bounded functions we are studying, and extend the results in the similar fashion as demonstrated in Section 5.3 in \cite{gromoll2002}. Again, the most important identity is estimation of the increment for the scaled measure, i.e. for small $h>0$, 
	\begin{align}
	&\langle g, {\bar \mu}^r(t+h) \rangle -\langle g, {\bar \mu}^r(t) \rangle\nonumber \\= &\left[\langle g, {\bar \mu}^r(t+h) \rangle - \sum_{i=1}^{ \langle 1, {\bar \mu}^r((t) \rangle}g(y - {\bar S}^i_{r;t, t+h})\right]+\left[\sum_{i=1}^{ \langle 1, {\bar \mu}^r((t) \rangle}g(y - {\bar S}^i_{r;t, t+h})- g(y)\right] \nonumber \\= & \left[\frac{1}{r} \sum_{i={\bar E}^r(t)+1}^{{\bar E}^r(t+h)}g(v^r_i - {\bar S}^{i,*}_{r;t, t+h})\right]+\sum_{i=1}^{ \langle 1, {\bar \mu}(t) \rangle}|{\bar S}^i_{r;t, t+h}g'(y^*)|. \label{eqn:difference}
	\end{align}
	for some ${\bar S}^{i,*}_{r;t, t+h}$ denoting the service amount received by new arrivals and $y_i*\in [y - {\bar S}^i_{r;t, t+h}, y]$, which is due to the mean value theorem.  Note that the first term in \eqref{eqn:difference} characterizes the changes in the remaining service time for the jobs that arrives between $rt$ and $r(t+h)$; and the second term characterizes the service changes for the jobs that are already in the system at time $rt$. Now, let us examine the first term.  We know that, $g(v^r_i - {\bar S}^{i,*}_{r;t, t+h})= g(v^r_i) - g'(v_i^*) {\bar S}^{i,*}_{r;t, t+h}$ for some $v_i^* \in [ v^r_i - {\bar S}^{i,*}_{r;t, t+h}, v^r_i]$ due to the mean value theorem. In combined with the limiting behavior of $E^r(t)$, we know that, 
\begin{align*}
\frac{1}{r} \sum_{i={\bar E}^r(t)+1}^{{\bar E}^r(t+h)} g(v^r_i) 
\end{align*}	
converges to $\al \langle g, \nu\rangle h$ as $r\rightarrow \infty$ and for $h$ small enough. Meanwhile,
\begin{align*}
\frac{1}{r} \sum_{i={\bar E}^r(t)+1}^{{\bar E}^r(t+h)} g'(v_i^*) {\bar S}^{i,*}_{r;t, t+h}
\end{align*}	
will converge to an term of order $o(h)$ since ${\bar S}^{i,*}_{r;t, t+h}<h$.

	Next, we have
	\begin{align*}
	&\sum_{i=1}^{ \langle 1, {\bar \mu}(t) \rangle}{\bar S}^i_{r;t, t+h}g'(y_i^*)\\ = & \sum_{i=1}^{ \langle 1, {\bar \mu}(t) \rangle}{\bar S}^i_{r;t, t+h}g'(x) + \sum_{i=1}^{ \langle 1, {\bar \mu}(t) \rangle}{\bar S}^i_{r;t, t+h}[g'(x) -g'(y_i^*)].
	\end{align*}
	We know that the first term, due to the dynamics of the service quantity ${\bar S}^i_{r;t, t+h}$, will converge to the quantity of $- \frac{\langle g' w, \mu^*(t)\rangle}{\langle w, \mu^*(t)\rangle} h$ where  $ \mu^*(t)$ will be the limiting point, as $r\rightarrow \infty$. The second term will converge zero due to the continuity and boundedness of $g'(\cdot)$.
	Thus, we have established that the differential equation to which the subsequence will converge is the same one that the fluid limit will satisfy. Then by the well known results in differential equation on the convergence of the solutions, see e.g. Theorem 4.1 in \cite{coddington1955theory}, we know that the solutions also converge the fluid limit. 
\end{proof}

\section{Conclusions}
\label{sec:conclusions}

In this paper, we derive a fluid limit for a class of processor sharing policies for a single server queue, motivated by applications that enabled flexible and workload-dependent scheduling among different jobs. The approach we took borrows heavily from the analysis of ordinary differential equations. We also would like to remark that, for properly selected test function, the fluid limit evolution equation can be readily solved numerically through numerical methods, such as Runge-Kutta or colocation methods, see,e.g. \cite{hairer2002geometric}.

\bibliography{Lu}{}
\bibliographystyle{abbrv}

\end{document}